\newtheorem{defn}{Definition}
\newtheorem{assumption}{Assumption}
\newtheorem{thm}{Theorem}
\newtheorem{cor}{Corollary}
\newtheorem{lem}{Lemma}
\newtheorem{rem}{Remark}
\newcommand{\K}{\mathcal{K}}
\newcommand{\Kinf}{\K_{\infty}}
\newcommand{\LL}{\mathcal{L}}
\newcommand{\KL}{\mathcal{KL}}
\newcommand{\KdL}{\mathcal{K \cdot L}}
\begin{document}

\title{\huge Optimization-based State Estimation under Bounded Disturbances}

\author{Wuhua Hu, Lihua Xie, and Keyou You
\thanks{This research was supported in part by the National High Technology Research and Development Program (863 Program) of China (2013AA040703). The first author thanks Professor James B. Rawlings for useful discussion on an initial version of this work, and Ji Luo for his useful comments.}
\thanks{W. Hu and L. Xie are with the School of Electrical and Electronic Engineering, Nanyang Technological University, Singapore. {\tt\small  \{hwh, elhxie\}@ntu.edu.sg}}
\thanks{ K. You is with the Department of Automation and TNList,
Tsinghua University,  Beijing 100084, P.~R.~China. {\tt\small  youky@tsinghua.edu.cn}}
}

\maketitle
\begin{abstract}
This paper studies an optimization-based state estimation approach for discrete-time nonlinear systems under bounded process and measurement disturbances. We first introduce a full information estimator (FIE), which is given as a solution to minimize a cost function by using all the available measurements. Then, we prove that the FIE of an incrementally input/output-to-state stable system is robustly globally asymptotically stable under a certain class of cost functions. Moreover, the implications and relationships with related results in the literature are discussed. Finally, a simple example is included to illustrate the theoretical results. 
\end{abstract}

\begin{IEEEkeywords}
Nonlinear systems; full information estimation; bounded disturbances; stability
\end{IEEEkeywords}

\section{Introduction}

Optimization-based estimation and in particular, a moving-horizon estimator
(MHE) has attracted extensive attention recently \cite{wynn2014convergence,ellis2014robust,voelker2013moving,rawlings2012optimization,alessandri2008moving,rawlings2006particle,rao2003constrained}.
MHE only uses the latest information to do optimization-based estimation, and has advantages in nonlinear systems over classical approaches such as extended Kalman
filter (EKF) \cite{ljung1979asymptotic,haseltine2005critical}. In contrast, a full information version (FIE) of the optimization-based estimator uses all the historical information for the state estimation. Although FIE is generically
intractable, it is fundamentally important as it provides a performance benchmark for other estimators \cite{rawlings2012optimization}.

Fundamental results on FIE were recently reviewed in
\cite{rawlings2012optimization}. When the system is incrementally input/output-to-state stable (i-IOSS), an FIE is robustly globally asymptotically stable (RGAS) for {\em convergent} process
and measurement disturbances if the cost function of the FIE satisfies certain conditions. However, it is unclear under what
conditions the above conclusion still holds for \emph{bounded} process
and measurement disturbances, which obviously happens more often in practice. The authors posted this challenge as
an open problem in the review paper.

This paper provides sufficient conditions for an FIE to be RGAS under bounded disturbances. The conditions require an appropriately defined cost function for the FIE, while the system is assumed to be i-IOSS. The general conditions become specific ones for three special cases, including the one investigated in \cite{ji2013robust} (a work inspired by Theorem \ref{thm:(RGAS-of-FIE} of this paper). We also note that the FIE having the RGAS property may be viewed as a kind of state observer for nonlinear systems which has attracted continuous attention and been researched for a long time \cite{sontag1997output,sontag2008input}.

The rest of the paper is organized as follows. In Section \ref{sec:Full-information-estimation},
we introduce notation and define the FIE under bounded disturbances. In Section \ref{sec:New-results-with},
we present sufficient conditions for the FIE to be RGAS, followed by a numerical example in Section \ref{sec:numerical example}. Finally, we draw conclusions in Section \ref{sec:Conclusion}.

\section{Full Information Estimation\label{sec:Full-information-estimation}}

We adopt the notation used in \cite{rawlings2012optimization} for the problem formulation. The symbols $\mathbb{R}$, $\mathbb{R}_{\ge0}$
and $\mathbb{I}_{\ge0}$ denote the sets of real numbers, nonnegative real numbers
and nonnegative integers, respectively; and $\mathbb{I}_{a : b}$ denotes the set of integers from $a$ to $b$. The symbol $\left|\cdot\right|$
denotes the Euclidean norm. The bold symbol $\boldsymbol{x}$, denotes
a sequence of vector-valued variables $x$, $\{x(0),\, x(1),\,...\}$.
The notation $\left\Vert \boldsymbol{x}\right\Vert $ is the supreme norm
over a sequence, $\sup_{i\ge0}\left|x(i)\right|$, and $\left\Vert \boldsymbol{x}\right\Vert _{a:b}$
denotes $\max_{a\le i\le b}\left|x(i)\right|$. The frequently used $\K$, $\Kinf$, $\LL$ and $\KL$ functions are defined as follows.
\begin{defn}
($\K$, $\Kinf$, $\LL$ and $\KL$ functions)
A function $\alpha: \mathbb{R}_{\ge0} \to \mathbb{R}_{\ge0}$
is a $\K$-function if it is continuous, zero at zero,
and strictly increasing, and a $\Kinf$-function if $\alpha$ is a $\K$-function and satisfies $\alpha(s) \to \infty$ as $s \to \infty$. A function $\varphi: \mathbb{R}_{\ge 0} \to \mathbb{R}_{\ge 0}$ is a $\LL$-function if it is continuous, nonincreasing and satisfies $\varphi(t) \to 0$ as $t \to \infty$. A function $\beta: \mathbb{R}_{\ge 0} \times \mathbb{R}_{\ge 0} \to \mathbb{R}_{\ge0}$ is a $\KL$-function if, for each
$t \ge 0$, $\beta(\cdot, t)$ is a $\K$-function and for each $s \ge 0$, $\beta(s, \cdot)$ is a $\LL$-function.
\end{defn}
The following properties of the $\K$- and $\KL$-functions will be used in proving our main results.
\begin{lem} \cite{rawlings2012optimization}
Given a $\K$-function $\alpha$ and a $\KL$-function
$\beta$, the following holds for all $a_{i} \in \mathbb{R}_{\ge 0}$,
$i \in \mathbb{I}_{1 : n}$, and all $t \in \mathbb{R}_{\ge 0}$,
\begin{equation*}
\alpha\left(\sum_{i=1}^{n}a_{i}\right) \le  \sum_{i=1}^{n}\alpha(na_{i}),\,\,
\beta\left(\sum_{i=1}^{n}a_{i}, t\right) \le \sum_{i=1}^{n}\beta(na_{i}, t).
\end{equation*}
\end{lem}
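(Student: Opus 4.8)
The plan is to reduce both inequalities to one elementary observation: among finitely many nonnegative reals $a_1,\dots,a_n$, a largest one dominates their average. So first I would fix the $a_i \in \mathbb{R}_{\ge 0}$, $i \in \mathbb{I}_{1:n}$, pick an index $k \in \mathbb{I}_{1:n}$ with $a_k = \max_{i} a_i$, and note that $a_k \ge a_i$ for every $i$ forces $s := \sum_{i=1}^n a_i \le n\,a_k$.

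For the first inequality, I would use only that a $\K$-function is nondecreasing on $\mathbb{R}_{\ge 0}$: from $s \le n a_k$ we get $\alpha(s) \le \alpha(n a_k)$. Since $\alpha$ takes values in $\mathbb{R}_{\ge 0}$, each term $\alpha(n a_i)$ is nonnegative and $\alpha(n a_k)$ is itself one of these terms, so $\alpha(n a_k) \le \sum_{i=1}^n \alpha(n a_i)$. Chaining the two bounds yields $\alpha\bigl(\sum_{i=1}^n a_i\bigr) \le \sum_{i=1}^n \alpha(n a_i)$.

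For the second inequality I would simply fix $t \ge 0$ and invoke the definition of a $\KL$-function: $\beta(\cdot,t)$ is a $\K$-function, so the argument above applies verbatim with $\alpha(\cdot)$ replaced by $\beta(\cdot,t)$, giving $\beta\bigl(\sum_{i=1}^n a_i,\,t\bigr) \le \sum_{i=1}^n \beta(n a_i,\,t)$ for all $t \ge 0$.

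I do not expect any real obstacle here. The only step that needs a moment's thought is the very first one — that the maximum of the $a_i$ bounds $\tfrac{1}{n}\sum_i a_i$ — and it is immediate. Note that continuity, strict monotonicity, and the $\LL$-decay of $\beta(s,\cdot)$ play no role whatsoever; monotonicity and nonnegativity of the functions are all that is used. The value of stating the lemma separately is purely for notational convenience when splitting disturbance terms in the later RGAS arguments, where $n$ is typically small (e.g.\ $n=2$ or $3$).
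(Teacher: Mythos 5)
Your proof is correct: the paper itself gives no proof of this lemma (it is cited from the reference on optimization-based estimation), and your argument via $\sum_i a_i \le n\max_i a_i$, monotonicity of $\alpha$, and nonnegativity of the remaining terms is exactly the standard one, with the $\KL$ case following by fixing $t$ and applying the same reasoning to the $\K$-function $\beta(\cdot,t)$. Nothing is missing.
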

In this work, we consider a discrete-time
nonlinear system described by
\begin{equation}
x^{+} = f(x, w),\,\,y = h(x) + v,
\label{eq:system}
\end{equation}
where $x \in \mathbb{R}^{n}$ is the system state, $y \in \mathbb{R}^{p}$
the measurement, $w \in \mathbb{R}^{g}$ the process disturbance, $v \in \mathbb{R}^{p}$
the measurement disturbance, and $x^{+} \in \mathbb{R}^{n}$ the system
state at the next sample time. A control input known up to the present time can be included but can be ignored in the formulation for the state
estimation \cite{rawlings2012optimization}. The functions $f$ and
$h$ are assumed to be continuous and known, and the initial state $x(0)$ and the disturbances $(w,v)$ are modeled as unknown but \emph{bounded} variables, which covers convergent disturbances as a special case.  

The state estimation problem is to find an optimal estimator of state
$\boldsymbol{x}$ based on measurement $\boldsymbol{y}$ as recorded for all sampled
times. This can be formulated as an optimization problem, yielding the so-called FIE. Let the decision variables be $(\boldsymbol{\chi}, \boldsymbol{\omega}, \boldsymbol{\nu})$,
which correspond to the system variables $(\boldsymbol{x}, \boldsymbol{w}, \boldsymbol{v})$,
and the optimal decision variables be $(\hat{\boldsymbol{x}}, \hat{\boldsymbol{w}}, \hat{\boldsymbol{v}})$.
Since $(\hat{\boldsymbol{x}}, \hat{\boldsymbol{w}}, \hat{\boldsymbol{v}})$,
which consist of optimal estimates at all sampled times, are uniquely
determined once $\hat{x}(0)$ and $\hat{\boldsymbol{w}}$ are known,
the decision variables essentially reduce to $\chi(0)$ and $\boldsymbol{\omega}$.
Let $t$ be the current time and $\bar{x}_{0}$ be the prior information
for the initial state. The uncertainty in the initial state is thus
denoted by $\chi(0) - \bar{x}_{0}$. Denote the cost function as $V_{t}(\chi(0) - \bar{x}_{0}, \boldsymbol{\omega})$, which
penalizes uncertainties in both the initial state and the
process. Then the FIE is defined as an optimization problem:
\begin{equation}
\begin{aligned}
\mathrm{FIE:~~~~~~} & \inf_{\chi_0, \boldsymbol{\omega}}V_{t}(\chi(0) - \bar{x}_{0}, \boldsymbol{\omega})\\
\mathrm{subject~to,~} & \chi^{+} = f(\chi, \omega), \,\,y = h(\chi) + \nu,\\
& \boldsymbol{\omega} \in \mathbb{B}_{w}, \boldsymbol{\nu} \in \mathbb{B}_{v},
\end{aligned}
\label{eq: FIE}
\end{equation}
where $\boldsymbol{\omega}$ and $\boldsymbol{\nu}$ denote the sequences of variables $\{\omega(i)\}$ and $\{\nu(i)\}$ for ${i \in \mathbb{I}_{0 : t}}$, respectively, and $\mathbb{B}_{w}$ and $\mathbb{B}_{v}$ denote two sets of bounded sequences of disturbances. If the actual disturbances are further known to converge to zero, then the two sets denote sequences of the convergent disturbances.

One important problem with the FIE is to identify conditions under
which the above optimization has an optimal solution for $(\chi(0),\,\boldsymbol{\omega})$
such that the state estimate satisfies the RGAS property defined
below. Let $\boldsymbol{x}(x_{0}, \boldsymbol{w})$ denote a state
sequence with an initial condition $x(0)=x_{0}$, and
a disturbance sequence $\boldsymbol{w}=\{w(0), w(1), ...\}$.
\begin{defn}
(RGAS \cite{rawlings2012optimization}) The estimate is based on the noisy measurement $\boldsymbol{y} = h(\boldsymbol{x}(x_{0}, \boldsymbol{w})) + \boldsymbol{v}$.
The estimate is RGAS if for all $x_{0}$ and $\bar{x}_{0}$, and bounded
$(\boldsymbol{w}, \boldsymbol{v})$, there exist functions $\beta_{x} \in \KL$
and $\alpha_{w}, \alpha_{v} \in \K$ such that the following inequality
holds for all $t \in \mathbb{I}_{\ge 0}$:
\begin{align*}
& \left|x(t; x_{0}, \boldsymbol{w}) - x(t; \hat{x}(0|t), \hat{\boldsymbol{w}}_{t})\right|\\
& \le \beta_{x}(\left|x_{0} - \bar{x}_{0}\right|, t) + \alpha_{w}(\left\Vert \boldsymbol{w}\right\Vert _{0 : t-1}) + \alpha_{v}(\left\Vert \boldsymbol{v}\right\Vert _{0 : t}),
\end{align*}
in which $\hat{x}(0|t)$ and $\hat{\boldsymbol{w}}_{t}$ are respectively
the initial state estimate and the estimated disturbances using measurements up to time $t$, and $x(t; x_{0}, \boldsymbol{w})$ denotes the system state of (\ref{eq:system}) at time $t$ with the initial state $x_0$ and disturbance $\boldsymbol{w}$.
\end{defn}
Note that we also consider the current measurement $y(t)$, which is ignored in the original definition \cite{rawlings2012optimization}. To obtain an RGAS FIE, the cost function needs to appropriately penalize the uncertainties in the initial state and the system, and the system dynamics should satisfy certain conditions. We identify and present such sufficient conditions in the next section.

\section{RGAS of the FIE\label{sec:New-results-with}}

We first introduce two definitions and one useful lemma as used in the sequel.
\begin{defn}
(i-IOSS \cite{rawlings2012optimization,sontag1997output})
The system $x^{+} = f(x, w)$, $y = h(x)$ is i-IOSS if there exist
functions $\beta \in \KL$ and $\alpha_{1}, \alpha_{2} \in \K$
such that for every two initial states $x_{01}, x_{02}$, and
two disturbances $\boldsymbol{w}_{1}, \boldsymbol{w}_{2}$, the following
holds:
\begin{align}
&\left|x(t; x_{01}, \boldsymbol{w}_{1})-x(t; x_{02}, \boldsymbol{w}_{2})\right| \le \beta(\left|x_{01} - x_{02}\right|, t) \nonumber\\
&+\alpha_{1}(\left\Vert \boldsymbol{w}_{1} - \boldsymbol{w}_{2}\right\Vert _{0 : t-1}) + \alpha_{2}(\left\Vert \boldsymbol{y}_{1} - \boldsymbol{y}_{2}\right\Vert _{0 : t}), \forall t\in\mathbb{I}_{\ge 0}.\label{eq:definition - i-IOSS}
\end{align}
\end{defn}
The definition of i-IOSS can be interpreted as a ``detectability'' concept
for nonlinear systems \cite{sontag1997output}, as the state may
be ``detected'' from the \emph{noise-free} output by (\ref{eq:definition - i-IOSS}).

In particular, if in \eqref{eq:definition - i-IOSS} $\beta(s, t) = \alpha(s) a^t$ for all $s, t \ge 0$,
with $\alpha \in \K$ and $a$ being a constant within $(0,1)$, the system is said to be \emph{exponentially i-IOSS} or \emph{exp-i-IOSS} for short. This can be viewed as extending the exponential input-to-state stability \cite{grune1999input,liu2010exponential} to the context of i-IOSS.

\begin{defn}\label{def: (-factorizable-}
($\KdL$-function) A $\KL$-function $\beta$ is called a $\KdL$-function if there exist functions $\alpha \in \K$ and $\varphi \in \LL$ such that $\beta(s, t)=\alpha(s)\varphi(t)$,
for all $s, t \ge 0$.
\end{defn}

As an example, the $\KL$-function $se^{-t}$ is a $\KdL$-function for $s,t\ge0$. The next lemma shows the general interest of a $\KdL$-function.
\begin{lem} \label{lem: factorizable-KL-bound}
($\KdL$ bound) Given an arbitrary $\KL$-function $\beta$, there exists a $\KdL$-function $\bar{\beta}$ such that $\beta(s, t) \le \bar{\beta}(s, t)$ for all $s, t \ge 0$.
\end{lem}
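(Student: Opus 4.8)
The plan is to construct explicit $\alpha\in\K$ and $\varphi\in\LL$ with $\beta(s,t)\le\alpha(s)\varphi(t)$ for all $s,t\ge0$; after a harmless regularization of $\varphi$ this makes $\bar\beta(s,t):=\alpha(s)\varphi(t)$ the desired $\KdL$-bound. The tempting choice $\alpha=\beta(\cdot,0)$ fails, since $\sup_{s>0}\beta(s,t)/\beta(s,0)$ need not tend to $0$ as $t\to\infty$ --- for $\beta(s,t)=s/(1+st)$ it is identically $1$ --- reflecting that $\beta(s,\cdot)$ may decay arbitrarily slowly as $s\to0^{+}$. The remedy is to let $\alpha$ \emph{over-dominate} $\rho(s):=\beta(s,0)$ at both ends of $(0,\infty)$; concretely, I would take
\[
\alpha(s):=\sqrt{\rho(s)}+s\,\rho(s),\qquad \varphi(t):=\sup_{s>0}\frac{\beta(s,t)}{\alpha(s)} .
\]
One checks that $\alpha\in\Kinf$ (a sum of $\K$-functions, unbounded because of the term $s\,\rho(s)$) and that $\rho(s)/\alpha(s)=\sqrt{\rho(s)}\big/\big(1+s\sqrt{\rho(s)}\big)$ tends to $0$ as $s\to0^{+}$ and as $s\to\infty$; being continuous on $(0,\infty)$ and vanishing at both ends it is bounded, so $\beta(s,t)\le\rho(s)$ yields $\varphi(t)<\infty$. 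Monotonicity of $\beta(s,\cdot)$ makes $\varphi$ nonincreasing, and $\beta(s,t)\le\alpha(s)\varphi(t)$ holds by construction (trivially so at $s=0$).

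The crux is $\varphi(t)\to0$ as $t\to\infty$. Fix $\varepsilon>0$ and split the supremum over $s$ into three ranges. First, pick $R$ with $\rho(s)/\alpha(s)<\varepsilon$ for all $s\ge R$; then $\beta(s,t)/\alpha(s)\le\rho(s)/\alpha(s)<\varepsilon$ there, uniformly in $t$. Second, pick $\delta$ with $\rho(s)/\alpha(s)<\varepsilon$ for all $s\in(0,\delta]$; for such $s$ and any $t$, set $\sigma:=\rho^{-1}(\beta(s,t))$, which is well defined with $0<\sigma\le s\le\delta$ because $0<\beta(s,t)\le\rho(s)$, so, using that $\alpha$ is nondecreasing,
\[
\frac{\beta(s,t)}{\alpha(s)}=\frac{\rho(\sigma)}{\alpha(s)}\le\frac{\rho(\sigma)}{\alpha(\sigma)}<\varepsilon .
\]
Third, on the compact range $s\in[\delta,R]$ one has $\beta(s,t)/\alpha(s)\le\beta(R,t)/\alpha(\delta)$, which tends to $0$ as $t\to\infty$ since $\beta(R,\cdot)\in\LL$, hence is $<\varepsilon$ for $t\ge T(\varepsilon)$. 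Combining the three estimates gives $\varphi(t)\le\varepsilon$ for $t\ge T(\varepsilon)$.

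Finally, $\varphi$ is nonincreasing and vanishes at infinity but may fail to be continuous, so I would pass to a continuous nonincreasing majorant $\tilde\varphi\ge\varphi$ with $\tilde\varphi(t)\to0$ (a routine piecewise-linear majorant will do). Then $\tilde\varphi\in\LL$, and $\bar\beta(s,t):=\alpha(s)\tilde\varphi(t)$ is a $\KdL$-function satisfying $\beta(s,t)\le\alpha(s)\varphi(t)\le\bar\beta(s,t)$, as required. The step I expect to be genuinely delicate is the choice of $\alpha$: one must realize that the slow decay of $\beta(s,\cdot)$ near $s=0$ defeats the naive factorization and forces the $\sqrt{\,\cdot\,}$-weighting at the origin, after which the substitution $\sigma=\rho^{-1}(\beta(s,t))$ disposes of the near-origin estimate and the rest is routine. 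One could instead invoke Sontag's $\KL$-lemma, $\beta(s,t)\le\theta_1(\theta_2(s)e^{-t})$ with $\theta_1,\theta_2\in\Kinf$, but the nonlinearity of $\theta_1$ generally obstructs turning this into a product, so the direct construction seems preferable.
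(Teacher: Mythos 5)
Your proof is correct, and it takes a genuinely different route from the paper. The paper disposes of the lemma in two lines by citing Lemma 8 of Sontag (1998), quoted in the product form $\beta(s,t)\le\alpha_1(s)\alpha_2(e^{-t})$ with $\alpha_1,\alpha_2\in\Kinf$, and observing that the right-hand side is already a $\KdL$-function. You instead build the factorization from scratch: the weighting $\alpha(s)=\sqrt{\rho(s)}+s\,\rho(s)$ with $\rho=\beta(\cdot,0)$, the definition $\varphi(t)=\sup_{s>0}\beta(s,t)/\alpha(s)$, and the three-range argument --- with the substitution $\sigma=\rho^{-1}(\beta(s,t))$ handling the near-origin range uniformly in $t$ --- establishing $\varphi\in\LL$ after a routine regularization. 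All steps check out; the only detail left implicit is that the regularized majorant $\tilde\varphi$ must remain strictly positive so that $s\mapsto\alpha(s)\tilde\varphi(t)$ is genuinely of class $\K$ for every $t$, which a piecewise-linear majorant through the positive values $\varphi(n)$ does deliver. The comparison is instructive: the form of Sontag's lemma that is usually quoted (and that you mention at the end) is the composition bound $\theta_1(\theta_2(s)e^{-t})$, and, as you correctly observe, passing from that to a product is not immediate --- bounding $\theta_1(ab)$ by $\sigma_1(a)\sigma_2(b)$ is essentially the same factorization problem over again, and the additive splitting $\theta_1(ab)\le\theta_1(a^2)+\theta_1(b^2)$ destroys the $\KL$ structure. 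Your direct construction therefore supplies exactly the content that the paper's one-line citation takes for granted, at the cost of a page of work; the paper's proof buys brevity provided the product form of the cited lemma is accepted as given. Your counterexample $\beta(s,t)=s/(1+st)$, showing that the naive choice $\alpha=\beta(\cdot,0)$ fails, is a worthwhile observation that the paper does not make explicit.
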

\begin{proof}
By Lemma 8 in \cite{sontag1998comments}, given arbitrary $\beta \in \KL$, there exist two functions $\alpha_1, \alpha_2 \in \Kinf$ such that $\beta(s, t) \le \alpha_1(s)\alpha_2(e^{-t}) =: \bar{\beta}(s, t)$ for all $s, t \ge 0$. Since $\bar{\beta}(s, t)$ is a $\KdL$-function in $s$ and $t$, this completes the proof.
\end{proof}

Lemma \ref{lem: factorizable-KL-bound} implies that the i-IOSS property in \eqref{eq:definition - i-IOSS} can be defined equivalently using a $\KdL$-function, which is useful in our later stability analysis of the FIE. Next, we introduce two assumptions for establishing our main result.

\begin{assumption} \label{assump: A1}
The FIE's cost function, $V_{t}(\chi(0) - \bar{x}_{0}, \boldsymbol{\omega})$, is defined to be continuous and satisfy the following inequalities for all $\chi(0), \bar{x}_{0} \in \mathbb{R}^{n}$,
$\boldsymbol{\omega} \in \mathbb{B}_{w}$ and $\boldsymbol{\nu} \in \mathbb{B}_{v}$:
\begin{align}
& \underbar{\ensuremath{\rho}}_{x}(\left|\chi(0) - \bar{x}_{0}\right|,  t) + \underbar{\ensuremath{\gamma}}_{w}(\left\Vert \boldsymbol{\omega}\right\Vert _{0 : t-1}) + \underbar{\ensuremath{\gamma}}_{v}(\left\Vert \boldsymbol{\nu}\right\Vert _{0 : t}) \nonumber \\
& \le V_{t}(\chi(0) - \bar{x}_{0}, \boldsymbol{\omega}) \nonumber\\
& \le \rho_{x}(\left|\chi(0) - \bar{x}_{0}\right|, t) + \gamma_{w}(\left\Vert \boldsymbol{\omega}\right\Vert _{0 : t-1}) + \gamma_{v}(\left\Vert \boldsymbol{\nu}\right\Vert _{0 : t}),\label{eq: Assumption 1}
\end{align}
where $\underbar{\ensuremath{\rho}}_{x}, \rho_{x} \in \KL$
and $\underbar{\ensuremath{\gamma}}_{w}, \underbar{\ensuremath{\gamma}}_{v}, \gamma_{w}, \gamma_{v} \in \Kinf$.
\end{assumption}

\begin{assumption} \label{assump: A2}
The $\K$ and $\KL$
functions in (\ref{eq:definition - i-IOSS})-(\ref{eq: Assumption 1})
satisfy the following inequalities for all $s_{x}, s_{w}, s_{v}, t \ge 0$:
\begin{align*}
& \beta\left(s_{x} + \underbar{\ensuremath{\gamma}}_{x, t}^{-1}\left(\rho_{x}(s_{x},  t) + \gamma_{w}(s_{w}) + \gamma_{v}(s_{v})\right), t\right) \\
& \le \bar{\beta}_{x}(s_{x}, t) + \bar{\alpha}_{w}(s_{w}) + \bar{\alpha}_{v}(s_{v}),
\end{align*}
in which $\underbar{\ensuremath{\gamma}}_{x, t}(s) := \underbar{\ensuremath{\rho}}_{x}(s, t)$
and $\underbar{\ensuremath{\gamma}}_{x, t}^{-1}(\cdot)$ defines
its inverse function, and $\bar{\beta}_{x}$, $\bar{\alpha}_{w}$
and $\bar{\alpha}_{v}$ are proper $\KL$, $\K$
and $\K$ functions, respectively.
\end{assumption}

Assumption \ref{assump: A1} ensures the FIE to have a property resembling the i-IOSS property of
the system, and Assumption \ref{assump: A2} ensures the FIE to be more sensitive to the initial state than the system to be. These will be clearer in the following corollaries. Under the above two assumptions, we establish our main result.

\begin{thm} \label{thm:(RGAS-of-FIE}(RGAS of the FIE) Suppose that the infimum in (\ref{eq: FIE}) is attainable, and the system (\ref{eq:system}) is i-IOSS. Under Assumptions \ref{assump: A1}-\ref{assump: A2}, the FIE in (\ref{eq: FIE}) is RGAS. Moreover, if we know that the disturbances ${w}(t)$ and ${v}(t)$ converge to zero in time,  then the FIE converges to the true state as $t \to \infty$.
\end{thm}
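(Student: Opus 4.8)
The plan is to bound the estimation error $|x(t; x_0, \boldsymbol{w}) - x(t; \hat{x}(0|t), \hat{\boldsymbol{w}}_t)|$ by invoking the i-IOSS inequality \eqref{eq:definition - i-IOSS} with the two trajectories being the true one $(x_0, \boldsymbol{w})$ and the estimated one $(\hat{x}(0|t), \hat{\boldsymbol{w}}_t)$. This produces a $\beta$-term in $|x_0 - \hat{x}(0|t)|$, an $\alpha_1$-term in $\|\boldsymbol{w} - \hat{\boldsymbol{w}}_t\|_{0:t-1}$, and an $\alpha_2$-term in $\|\boldsymbol{y}_{\mathrm{true}} - \boldsymbol{y}_{\mathrm{est}}\|_{0:t}$; but since both trajectories are required to reproduce the \emph{same} recorded measurement $\boldsymbol{y}$ (the FIE constraint $y = h(\chi) + \nu$ together with $\boldsymbol{y} = h(\boldsymbol{x}) + \boldsymbol{v}$), the output-difference term is controlled by $\|\boldsymbol{v}\|_{0:t} + \|\hat{\boldsymbol{v}}_t\|_{0:t}$. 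So the core task reduces to bounding the three quantities $|\hat{x}(0|t) - \bar{x}_0|$, $\|\hat{\boldsymbol{w}}_t\|_{0:t-1}$, and $\|\hat{\boldsymbol{v}}_t\|_{0:t}$ in terms of $|x_0 - \bar{x}_0|$, $\|\boldsymbol{w}\|_{0:t-1}$, $\|\boldsymbol{v}\|_{0:t}$.

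First I would establish the key optimality estimate. Since $(\chi(0), \boldsymbol{\omega}) = (x_0, \boldsymbol{w})$ is feasible for \eqref{eq: FIE} (it reproduces $\boldsymbol{y}$ with $\boldsymbol{\nu} = \boldsymbol{v}$), optimality of the minimizer gives
\[
V_t(\hat{x}(0|t) - \bar{x}_0, \hat{\boldsymbol{w}}_t) \le V_t(x_0 - \bar{x}_0, \boldsymbol{w}).
\]
Applying the lower bound of Assumption~\ref{assump: A1} to the left side and the upper bound to the right side yields
\[
\underline{\rho}_x(|\hat{x}(0|t) - \bar{x}_0|, t) + \underline{\gamma}_w(\|\hat{\boldsymbol{w}}_t\|_{0:t-1}) + \underline{\gamma}_v(\|\hat{\boldsymbol{v}}_t\|_{0:t}) \le \rho_x(|x_0 - \bar{x}_0|, t) + \gamma_w(\|\boldsymbol{w}\|_{0:t-1}) + \gamma_v(\|\boldsymbol{v}\|_{0:t}).
\]
Because each term on the left is nonnegative, I can isolate each one and invert: using $\underline{\gamma}_{x,t}(s) = \underline{\rho}_x(s,t)$ and its inverse, $|\hat{x}(0|t) - \bar{x}_0| \le \underline{\gamma}_{x,t}^{-1}(\rho_x(|x_0-\bar{x}_0|,t) + \gamma_w(\|\boldsymbol{w}\|_{0:t-1}) + \gamma_v(\|\boldsymbol{v}\|_{0:t}))$, and similarly $\|\hat{\boldsymbol{w}}_t\|_{0:t-1} \le \underline{\gamma}_w^{-1}(\cdots)$ and $\|\hat{\boldsymbol{v}}_t\|_{0:t} \le \underline{\gamma}_v^{-1}(\cdots)$ with the same right-hand argument.

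Next I would assemble the bounds. Using the triangle inequality $|x_0 - \hat{x}(0|t)| \le |x_0 - \bar{x}_0| + |\hat{x}(0|t) - \bar{x}_0|$ and $\|\boldsymbol{w} - \hat{\boldsymbol{w}}_t\| \le \|\boldsymbol{w}\| + \|\hat{\boldsymbol{w}}_t\|$, feed these into the i-IOSS inequality. The $\beta$-term becomes $\beta(|x_0 - \bar{x}_0| + \underline{\gamma}_{x,t}^{-1}(\rho_x(|x_0-\bar{x}_0|,t) + \gamma_w(\|\boldsymbol{w}\|_{0:t-1}) + \gamma_v(\|\boldsymbol{v}\|_{0:t})), t)$, which is exactly the quantity Assumption~\ref{assump: A2} is designed to dominate by $\bar{\beta}_x(|x_0-\bar{x}_0|,t) + \bar{\alpha}_w(\|\boldsymbol{w}\|_{0:t-1}) + \bar{\alpha}_v(\|\boldsymbol{v}\|_{0:t})$. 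The remaining $\alpha_1$- and $\alpha_2$-terms involve $\underline{\gamma}_w^{-1}$ and $\underline{\gamma}_v^{-1}$ composed with sums; here I would apply Lemma~1 to split $\alpha(a+b+c) \le \alpha(3a) + \alpha(3b) + \alpha(3c)$ and likewise for $\beta$, regrouping all terms purely in $|x_0-\bar{x}_0|$, all terms purely in $\|\boldsymbol{w}\|_{0:t-1}$, and all terms purely in $\|\boldsymbol{v}\|_{0:t}$, and then taking the pointwise maximum (or sum) of the resulting $\K$-functions in each group to define the final $\beta_x \in \KL$, $\alpha_w, \alpha_v \in \K$. One subtlety is that a term like $\rho_x(|x_0-\bar{x}_0|,t)$ appearing inside a $\K$-function of $\|\boldsymbol{w}\|$'s coefficient must be handled: since $\rho_x \in \KL$ it is bounded by $\rho_x(\cdot, 0)$, a $\K$-function, so everything collapses to valid $\K$/$\KL$ bounds. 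For the convergence claim, when $\boldsymbol{w}, \boldsymbol{v} \to 0$ I would note $\|\boldsymbol{w}\|_{0:t-1}$ and $\|\boldsymbol{v}\|_{0:t}$ need not vanish, so instead I would use $\|\boldsymbol{w}\|_{\lfloor t/2 \rfloor : t-1}$-type tail bounds together with the decay of $\beta(\cdot, t/2)$ — a standard causality/splitting argument — to show the error tends to zero.

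The main obstacle is the bookkeeping in the last step: the inverted functions $\underline{\gamma}_{x,t}^{-1}, \underline{\gamma}_w^{-1}, \underline{\gamma}_v^{-1}$ are composed with $\KL$- and $\K$-functions in ways that must be shown to remain $\KL$ (in $t$) or $\K$, and one must verify that $\underline{\gamma}_{x,t}^{-1}(\rho_x(s,t))$ is still a $\KL$-function of $(s,t)$ — this is precisely why Assumption~\ref{assump: A2} is stated in terms of the composite expression rather than component-wise, so that the delicate interaction between $\underline{\rho}_x$ and $\rho_x$ in the $t$ variable is hypothesized away. I expect the proof to hinge on carefully tracking which bounds are uniform in $t$ and repeatedly invoking Lemma~1 to linearize sums inside $\K$-functions.
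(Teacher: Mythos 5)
Your outline follows the paper's proof essentially step for step: the same optimality comparison $V_t(\hat{x}(0|t)-\bar{x}_0,\hat{\boldsymbol{w}}_t)\le V_t(x_0-\bar{x}_0,\boldsymbol{w})$, the same term-by-term inversion of the lower bound in Assumption~\ref{assump: A1}, the same triangle inequalities, and the same substitution into the i-IOSS inequality with Assumption~\ref{assump: A2} absorbing the $\beta$-term and Lemma~1 splitting the sums inside $\alpha_1,\alpha_2$ (the paper uses a factor $4$ rather than $3$ since each sum has four summands). Two points need repair, however. First, your handling of the ``subtlety'' goes the wrong way: you propose to bound the cross-terms of the form $\alpha_1(4\,\underline{\gamma}_w^{-1}(4\rho_x(|x_0-\bar{x}_0|,t)))$ (and the $\alpha_2$ analogue) by their values at $t=0$, producing a pure $\K$-function of $|x_0-\bar{x}_0|$ with no decay in $t$. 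Such a term cannot be absorbed into the $\KL$-function $\beta_x(\cdot,t)$ that the RGAS definition demands (a $\KL$-function must vanish as $t\to\infty$ for each fixed $s$), and it obviously cannot go into $\alpha_w$ or $\alpha_v$, so the final bound would not have the RGAS form. The correct --- and simpler --- move is the opposite one: keep the $t$-dependence, observe that a $\K$-function composed with a $\KL$-function of $(s,t)$ is again $\KL$, and group these cross-terms together with $\bar{\beta}_x$ to form the final $\beta_x$, exactly as the paper does in \eqref{eq: key eq}.

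Second, your convergence sketch has the right splitting idea (tail of the disturbances plus decay of $\beta$ from an intermediate time), but when you apply i-IOSS from the split time onward the relevant quantities are $\|\boldsymbol{w}-\hat{\boldsymbol{w}}_t\|$ and $\|\boldsymbol{v}-\hat{\boldsymbol{v}}_t\|$ over the tail window, so you must also argue that the tails of the \emph{estimated} disturbance sequences vanish, uniformly over the estimation horizon $t$. The paper obtains this from the constraints $\boldsymbol{\omega}\in\mathbb{B}_w$, $\boldsymbol{\nu}\in\mathbb{B}_v$, which by hypothesis encode the knowledge that the true disturbances converge to zero and hence force the estimates to do likewise; it also uses the already-established RGAS bound to control the state discrepancy at the split time uniformly in $t$. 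Without the first of these ingredients your splitting argument does not close.
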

\begin{proof} (a)
\emph{ RGAS.}  Let the global optimal solution of the FIE result in a minimum cost $V_{t}^{o}$. It follows that for all $t\ge0$,
\[
V_{t}^{o}=V_{t}(\hat{x}(0|t)-\bar{x}_{0},\,\hat{\boldsymbol{w}}_{t})\le V_{t}(x_{0}-\bar{x}_{0},\,\boldsymbol{w})=:\bar{V}_{t}.
\]
By Assumption \ref{assump: A1} we have
\[
\bar{V}_{t} \le \rho_{x}(\left|x_{0}-\bar{x}_{0}\right|,\, t)+\gamma_{w}(\left\Vert \boldsymbol{w}\right\Vert _{0:t-1})+\gamma_{v}(\left\Vert \boldsymbol{v}\right\Vert _{0:t}).
\]
Together with $\underbar{\ensuremath{\gamma}}_{x,\, t}(\left|\hat{x}(0|t)-\bar{x}_{0}\right|):=\underbar{\ensuremath{\rho}}_{x}(\left|\hat{x}(0|t)-\bar{x}_{0}\right|,\, t)\le V_{t}^{o}\le\bar{V}_{t}$,
this leads to $\left|\hat{x}(0|t)-\bar{x}_{0}\right|\le\underbar{\ensuremath{\gamma}}_{x,\, t}^{-1}(\bar{V}_{t})$,
where $\underbar{\ensuremath{\gamma}}_{x,\, t}^{-1}(\bar{V}_{t})$
is dependent on time $t$. By using the triangle inequality, this
further results in
\begin{gather}
\left|x_{0}-\hat{x}(0|t)\right|\le\left|x_{0}-\bar{x}_{0}\right|+\left|\hat{x}(0|t)-\bar{x}_{0}\right|\nonumber \\
\le\left|x_{0}-\bar{x}_{0}\right|+\underbar{\ensuremath{\gamma}}_{x,\, t}^{-1}\left(\begin{array}{c}
\rho_{x}(\left|x_{0}-\bar{x}_{0}\right|,\, t)\\
+\gamma_{w}(\left\Vert \boldsymbol{w}\right\Vert _{0:t-1})+\gamma_{v}(\left\Vert \boldsymbol{v}\right\Vert _{0:t})
\end{array}\right).\label{eq:x0-x0^}
\end{gather}

The second term on right hand side of the second inequality
is dependent on time $t$.

Next we derive a bound for the term $\left\Vert \boldsymbol{w}-\hat{\boldsymbol{w}}_{t}\right\Vert _{0:t-1}$.
From the triangle inequality we have
\begin{equation}\label{equ1}
\left\Vert \boldsymbol{w}-\hat{\boldsymbol{w}}_{t}\right\Vert _{0:t-1}\le\left\Vert \boldsymbol{w}\right\Vert _{0:t-1}+\left\Vert \hat{\boldsymbol{w}}_{t}\right\Vert _{0:t-1}.
\end{equation}
Since $\underbar{\ensuremath{\gamma}}_{w}(\left\Vert \hat{\boldsymbol{w}}_{t}\right\Vert _{0:t-1})\le V_{t}^{o}\le\bar{V}_{t}$,
it implies that $\left\Vert \hat{\boldsymbol{w}}_{t}\right\Vert _{0:t-1}\le\underbar{\ensuremath{\gamma}}_{w}^{-1}(\bar{V}_{t})$.
Consequently, it follows from (\ref{equ1}) that
\begin{align}
 &  \left\Vert \boldsymbol{w}-\hat{\boldsymbol{w}}_{t}\right\Vert _{0:t-1}\nonumber \\
 & \le \left\Vert \boldsymbol{w}\right\Vert _{0:t-1}+\underbar{\ensuremath{\gamma}}_{w}^{-1}\left(\begin{array}{c}
\rho_{x}(\left|x_{0}-\bar{x}_{0}\right|,\, t)\\
+\gamma_{w}(\left\Vert \boldsymbol{w}\right\Vert _{0:t-1})+\gamma_{v}(\left\Vert \boldsymbol{v}\right\Vert _{0:t})
\end{array}\right)\nonumber \\
 & \le \rho{}_{x}^{w}(\left|x_{0}-\bar{x}_{0}\right|,\, t)+\left\Vert \boldsymbol{w}\right\Vert _{0:t-1}\nonumber \\
 & \,\,\,\,\,\,+\gamma_{w}^{w}(\left\Vert \boldsymbol{w}\right\Vert _{0:t-1})+\gamma_{v}^{w}(\left\Vert \boldsymbol{v}\right\Vert _{0:t}),\label{eq:w-w^}
\end{align}
where $\rho{}_{x}^{w}:=\underbar{\ensuremath{\gamma}}_{w}^{-1}\circ\rho_{x}$,
$\gamma{}_{w}^{w}:=\underbar{\ensuremath{\gamma}}_{w}^{-1}\circ\gamma{}_{w}$
and $\gamma{}_{v}^{w}:=\underbar{\ensuremath{\gamma}}_{w}^{-1}\circ\gamma{}_{v}$
which are $\KL$, $\K$ and $\K$ functions,
respectively. By applying the same reasoning to $\left\Vert \boldsymbol{v}-\hat{\boldsymbol{v}}_{t}\right\Vert _{0:t-1}$,
it yields
\begin{align}
\left\Vert \boldsymbol{v}-\hat{\boldsymbol{v}}_{t}\right\Vert _{0:t}& \le \rho{}_{x}^{v}(\left|x_{0}-\bar{x}_{0}\right|,\, t) \nonumber \\
& \,\,\,\,\,\,+\left\Vert \boldsymbol{v}\right\Vert _{0:t}+\gamma_{w}^{v}(\left\Vert \boldsymbol{w}\right\Vert _{0:t-1})+\gamma_{v}^{v}(\left\Vert \boldsymbol{v}\right\Vert _{0:t}),\label{eq:v-v^}
\end{align}
where $\rho{}_{x}^{v}$, $\gamma{}_{w}^{v}$ and $\gamma{}_{v}^{v}$
are $\KL$, $\K$ and $\K$ functions,
respectively.

Substitute (\ref{eq:x0-x0^})-(\ref{eq:v-v^}) into (\ref{eq:definition - i-IOSS}) of i-IOSS leads to (\ref{eq: key eq}),
\begin{figure*}
\begin{align}
 &  \left|x(t;\, x_{0},\,\boldsymbol{w})-x(t;\,\hat{x}(0|t),\,\hat{\boldsymbol{w}}_{t})\right|\nonumber \\
 & \le \beta\left(\left|x_{0}-\bar{x}_{0}\right|+\underbar{\ensuremath{\gamma}}_{x,\, t}^{-1}\left(\rho_{x}(\left|x_{0}-\bar{x}_{0}\right|,\, t)+\gamma_{w}(\left\Vert \boldsymbol{w}\right\Vert _{0:t-1})+\gamma_{v}(\left\Vert \boldsymbol{v}\right\Vert _{0:t})\right),\, t\right)\nonumber \\
 &  \,\,\,\,\,\,+\alpha_{1}\left(\rho{}_{x}^{w}(\left|x_{0}-\bar{x}_{0}\right|,\, t)+\left\Vert \boldsymbol{w}\right\Vert _{0:t-1}+\gamma_{w}^{w}(\left\Vert \boldsymbol{w}\right\Vert _{0:t-1})+\gamma_{v}^{w}(\left\Vert \boldsymbol{v}\right\Vert _{0:t})\right)\nonumber \\
 &  \,\,\,\,\,\,+\alpha_{2}\left(\rho{}_{x}^{v}(\left|x_{0}-\bar{x}_{0}\right|,\, t)+\left\Vert \boldsymbol{v}\right\Vert _{0:t}+\gamma_{w}^{v}(\left\Vert \boldsymbol{w}\right\Vert _{0:t-1})+\gamma_{v}^{v}(\left\Vert \boldsymbol{v}\right\Vert _{0:t})\right)\nonumber \\
 & \le \bar{\beta}_{x}\left(\left|x_{0}-\bar{x}_{0}\right|,\, t\right)+\alpha_{1}\left(4\rho{}_{x}^{w}(\left|x_{0}-\bar{x}_{0}\right|,\, t)\right)+\alpha_{2}\left(4\rho{}_{x}^{v}(\left|x_{0}-\bar{x}_{0}\right|,\, t)\right)\nonumber \\
 &  \,\,\,\,\,\,+\bar{\alpha}_{w}\left(\left\Vert \boldsymbol{w}\right\Vert _{0:t-1}\right)+\alpha_{1}\left(4\left\Vert \boldsymbol{w}\right\Vert _{0:t-1}\right)+\alpha_{1}\left(4\gamma_{w}^{w}(\left\Vert \boldsymbol{w}\right\Vert _{0:t-1})\right)+\alpha_{2}\left(4\gamma_{w}^{v}(\left\Vert \boldsymbol{w}\right\Vert _{0:t-1})\right)\nonumber \\
 &  \,\,\,\,\,\,+\bar{\alpha}_{v}\left(\left\Vert \boldsymbol{v}\right\Vert _{0:t}\right)+\alpha_{1}\left(4\gamma_{v}^{w}(\left\Vert \boldsymbol{v}\right\Vert _{0:t})\right)+\alpha_{2}\left(4\left\Vert \boldsymbol{v}\right\Vert _{0:t}\right)+\alpha_{2}\left(4\gamma_{v}^{v}(\left\Vert \boldsymbol{v}\right\Vert _{0:t})\right),\label{eq: key eq}
\end{align}
\end{figure*}
where Assumption \ref{assump: A2} has been used to derive the last inequality. As the sums of terms in the three lines of the
last inequality form classes $\KL$, $\K$ and $\K$
functions, respectively, we can denote them as $\beta_{x}(\left|x_{0}-\bar{x}_{0}\right|,\, t)$,
$\alpha_{w}(\left\Vert \boldsymbol{w}\right\Vert _{0:t-1})$ and $\alpha_{v}(\left\Vert \boldsymbol{v}\right\Vert _{0:t})$ in sequence, and hence conclude from (\ref{eq: key eq}) that
\begin{align*}
& \left|x(t;\, x_{0},\,\boldsymbol{w})-x(t;\,\hat{x}(0|t),\,\hat{\boldsymbol{w}}_{t})\right|\\
& \le\beta_{x}(\left|x_{0}-\bar{x}_{0}\right|,\, t)+\alpha_{w}(\left\Vert \boldsymbol{w}\right\Vert _{0:t-1})+\alpha_{v}(\left\Vert \boldsymbol{v}\right\Vert _{0:t}).
\end{align*}
This means that the FIE is RGAS, and hence completes the RGAS proof.

(b) \emph{Convergence}. Let the sequences of  $\boldsymbol{w}$ and $\boldsymbol{v}$ be bounded as $\|\boldsymbol{w}\|\le M_w$ and $\|\boldsymbol{v}\|\le M_v$ for some constants $M_w, M_v \ge 0$. Since the FIE is RGAS, we have
\begin{align*}
& \left|x(t; x_{0}, \boldsymbol{w}) - x(t; \hat{x}(0|t), \hat{\boldsymbol{w}}_{t})\right| \\
& \le \beta_{x}(\left|x_{0} - \bar{x}_{0}\right|, t) + \alpha_{w}(M_w) + \alpha_{v}(M_v),
\end{align*}
for all $t \ge 0$. Because the disturbance and noise sequences are known to converge to zero, this knowledge constrains the feasible sets of the disturbance and noise estimates and ensures that these estimates obtained by the FIE defined in \eqref{eq: FIE} will converge to zero. Therefore, for any $\epsilon >0$, there exists a time $T_\epsilon > 0$ such that $|w(t)|,|\hat{w}(t)| \le 0.5\alpha_1^{-1}(\epsilon / 8)$ and $|v(t)|,|\hat{v}(t)| \le 0.5\alpha_2^{-1}(\epsilon / 8)$ for all $t \ge T_\epsilon$. By the definition of $\KL$-function, for any $\epsilon > 0$ there exists a time $\tau_\epsilon$ such that $\beta\left(\beta_{x}(\left | x_{0}  -  \bar{x}_0 \right |, 0) + \alpha_{w}(M_w) + \alpha_{v}(M_v), t\right) \le \epsilon/2$ for all $t \ge \tau_\epsilon$. Hence, for $t \ge T_\epsilon + \tau_\epsilon$ we obtain
\begin{align*}
& \left|x(t; x_{0}, \boldsymbol{w}) - x(t; \hat{x}(0|t), \hat{\boldsymbol{w}}_{t})\right| \\
\overset{\text{by i-IOSS}}{\le} &  \beta\left(\left|x(T_\epsilon; x_{0}, \boldsymbol{w}) - x(T_\epsilon; \hat{x}(0|t), \hat{\boldsymbol{w}}_{t})\right|, t-T_\epsilon\right) \\
& + \alpha_{1}(\left\Vert \boldsymbol{w} - \boldsymbol{\hat{w}}_t \right\Vert _{T_\epsilon : t-1}) + \alpha_{2}(\left\Vert \boldsymbol{v} - \boldsymbol{\hat{v}}_t \right\Vert _{T_\epsilon : t})\\
\overset{\text{by RGAS}}{\le} & \beta\left(\beta_{x}(\left|x_0 - \bar{x}_0\right|, 0) + \alpha_w(M_w) + \alpha_v(M_v), t-T_\epsilon\right) \\
& + \alpha_{1}(2\left\Vert \boldsymbol{w}\right\Vert _{T_\epsilon : t-1}) + \alpha_{1}(2\left\Vert \boldsymbol{\hat{w}}_t\right\Vert _{T_\epsilon : t-1})\\
 & + \alpha_{2}(2\left\Vert \boldsymbol{v}\right\Vert _{T_\epsilon : t}) + \alpha_{2}(2\left\Vert \boldsymbol{\hat{v}}_t\right\Vert _{T_\epsilon : t})\\
\le & \epsilon/2 + \epsilon/8  + \epsilon/8  + \epsilon/8 + \epsilon/8 = \epsilon,
\end{align*}
which implies that $x(t; \hat{x}(0|t), \hat{\boldsymbol{w}}_{t})$ converges to $x(t; x_{0}, \boldsymbol{w})$ as $t\rightarrow \infty$. This completes the convergence proof.
\end{proof}
\begin{rem}The RGAS proof is motivated from Proposition 11 of \cite{rawlings2012optimization}, which however can only be applied to the FIE with a specific cost function for convergent disturbances. 

 Theorem \ref{thm:(RGAS-of-FIE} gives sufficient conditions for an FIE to be RGAS (or convergent) under bounded (or convergent) disturbances. From this point of view,  it extends the results in \cite{rawlings2012optimization}. 

\end{rem}

Assumption \ref{assump: A2} is rather general. Using the $\KdL$-function introduced in Definition \ref{def: (-factorizable-},
we can obtain more specific conditions admitting easier interpretation.
\begin{cor}
\label{cor:The-FIE-defined}The FIE defined in (\ref{eq: FIE}) is RGAS if the following conditions are satisfied:

a) the system given in (\ref{eq:system}) is i-IOSS;

b) the FIE's cost function satisfies Assumption \ref{assump: A1}, and the infimum is attainable;

c) the $\KL$-functions $\beta$ in (\ref{eq:definition - i-IOSS}) and $\underbar{\ensuremath{\rho}}_{x},\,\rho_{x}$ in (\ref{eq: Assumption 1}) are $\KdL$-functions in the form of $\beta(s,\, t)=\mu_{1}(s)\varphi_{1}(t)$, $\underbar{\ensuremath{\rho}}_{x}(s,\, t)=\mu_{2}(s)\varphi_{2}(t)$ and $\rho_{x}(s,\,t)=\mu_{3}(s)\varphi_{2}(t)$, where $\mu_{1},\,\mu_{2},\,\mu_{3}\in\K$, and $\varphi_{1},\varphi_{2} \in \LL$, and satisfy
\begin{equation}\label{eq: key-stability-condition}
    \mu_{1}\left(4\mu_{2}^{-1}\left(\frac{\pi(s)}{\varphi_{2}(t)}\right)\right)\varphi_{1}(t) \le \pi'(s)
\end{equation}
for an arbitrary $\pi\in\K$ and some $\pi'\in\K$.
\end{cor}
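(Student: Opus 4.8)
The plan is to obtain Corollary~\ref{cor:The-FIE-defined} as a direct consequence of Theorem~\ref{thm:(RGAS-of-FIE}, by verifying that conditions a)--c) imply its hypotheses. Condition a) is precisely the i-IOSS hypothesis, and condition b) supplies Assumption~\ref{assump: A1} together with attainability of the infimum. Hence the only substantive step is to show that the $\KdL$ structure of condition c) and the inequality \eqref{eq: key-stability-condition} together imply Assumption~\ref{assump: A2}; once this is done, Theorem~\ref{thm:(RGAS-of-FIE} delivers the RGAS conclusion of the corollary with no further work.

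To check Assumption~\ref{assump: A2}, I would substitute the factored forms $\beta(s,t)=\mu_1(s)\varphi_1(t)$, $\underbar{\ensuremath{\rho}}_x(s,t)=\mu_2(s)\varphi_2(t)$ and $\rho_x(s,t)=\mu_3(s)\varphi_2(t)$ into its left-hand side. Since $\underbar{\ensuremath{\gamma}}_{x,t}(s)=\mu_2(s)\varphi_2(t)$, its inverse is $\underbar{\ensuremath{\gamma}}_{x,t}^{-1}(r)=\mu_2^{-1}(r/\varphi_2(t))$; and because condition c) makes $\underbar{\ensuremath{\rho}}_x$ and $\rho_x$ share the same time factor $\varphi_2$, the quantity $\rho_x(s_x,t)/\varphi_2(t)=\mu_3(s_x)$ carries no time dependence. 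The argument of $\beta(\cdot,t)$ then reduces to $s_x+\mu_2^{-1}(\mu_3(s_x)+(\gamma_w(s_w)+\gamma_v(s_v))/\varphi_2(t))$, so the only potential growth in $t$ comes from the factor $1/\varphi_2(t)$ multiplying $\gamma_w(s_w)$ and $\gamma_v(s_v)$, where $\gamma_w,\gamma_v\in\Kinf$ are the upper-bound gains from Assumption~\ref{assump: A1}.

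Next I would use the elementary $\K$-inequality $\alpha(\sum_{i=1}^{n}a_i)\le\sum_{i=1}^{n}\alpha(na_i)$ twice. First, with $n=3$, bound $\mu_2^{-1}(\mu_3(s_x)+\gamma_w(s_w)/\varphi_2(t)+\gamma_v(s_v)/\varphi_2(t))\le\mu_2^{-1}(3\mu_3(s_x))+\mu_2^{-1}(3\gamma_w(s_w)/\varphi_2(t))+\mu_2^{-1}(3\gamma_v(s_v)/\varphi_2(t))$; then, with $n=4$, split $\mu_1$ of the resulting four-term sum. Multiplying through by $\varphi_1(t)$ produces four pieces: $\mu_1(4s_x)\varphi_1(t)$ and $\mu_1(4\mu_2^{-1}(3\mu_3(s_x)))\varphi_1(t)$ are $\KdL$-functions of $(s_x,t)$, hence $\KL$, and are absorbed into $\bar\beta_x$; while $\mu_1(4\mu_2^{-1}(3\gamma_w(s_w)/\varphi_2(t)))\varphi_1(t)$ and its $v$-counterpart are exactly of the shape governed by \eqref{eq: key-stability-condition}. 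Applying \eqref{eq: key-stability-condition} with $\pi=3\gamma_w\in\K$ (respectively $\pi=3\gamma_v\in\K$) bounds these two terms by time-independent $\K$-functions, which we take as $\bar\alpha_w$ and $\bar\alpha_v$. The sum of all four pieces is then precisely the inequality demanded by Assumption~\ref{assump: A2}, so Theorem~\ref{thm:(RGAS-of-FIE} applies and the FIE is RGAS.

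The \emph{main obstacle} is the blow-up of $\mu_1(4\mu_2^{-1}(\,\cdot\,/\varphi_2(t)))$ as $t\to\infty$, since $\varphi_2(t)\to0$: Assumption~\ref{assump: A1} alone only bounds $|\hat x(0|t)-\bar x_0|$ through a time-dependent estimate that may degrade, and it is exactly \eqref{eq: key-stability-condition}---forcing $\varphi_1$ to decay fast enough to absorb this growth---that keeps the disturbance gains $\bar\alpha_w,\bar\alpha_v$ finite and independent of $t$; this is the quantitative meaning of the FIE being more sensitive to the initial state than the system. A minor, purely technical point is to ensure that the compositions with $\mu_2^{-1}$ are well defined on the ranges that occur and that the assembled $\bar\beta_x,\bar\alpha_w,\bar\alpha_v$ are genuine $\KL$, $\K$, $\K$ functions; this follows from closure of $\K$ under composition and addition and from $\pi'\in\K$ in \eqref{eq: key-stability-condition}.
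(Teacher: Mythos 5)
Your proposal is correct and follows essentially the same route as the paper's own proof: reduce the corollary to verifying Assumption~\ref{assump: A2}, exploit the shared time factor $\varphi_2$ so that $\underbar{\ensuremath{\gamma}}_{x,t}^{-1}(3\rho_x(s_x,t))=\mu_2^{-1}(3\mu_3(s_x))$ is time-independent, split the argument with the $\K$-inequality (first $n=3$ inside $\mu_2^{-1}$, then $n=4$ for $\mu_1$), and invoke \eqref{eq: key-stability-condition} with $\pi=3\gamma_w$ and $\pi=3\gamma_v$ to obtain the time-independent gains $\bar\alpha_w,\bar\alpha_v$. The only cosmetic difference is that you absorb $\mu_1(4s_x)\varphi_1(t)$ directly into $\bar\beta_x$, whereas the paper adds it at the end as $\bar\beta_x'$.
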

\begin{proof}
It is sufficient to show that Assumption \ref{assump: A2} is satisfied under the
condition c). With $\underbar{\ensuremath{\gamma}}_{x,\, t}(s):=\underbar{\ensuremath{\rho}}_{x}(s,\, t)=\mu_2(s) \varphi_2(t)$ and $\rho_{x}(s,\,t)=\mu_{3}(s)\varphi_{2}(t)$, we have
\begin{align*}
& \beta\left(4\underbar{\ensuremath{\gamma}}_{x,\, t}^{-1}\left(3\rho_{x}(s_{x},\, t)\right),\, t\right) =  \beta\left(4\mu_{2}^{-1}\left(\frac{3\rho_{x}(s_{x},\, t)}{\varphi_{2}(t)}\right),\, t\right)\\
 & = \beta\left(4\mu_{2}^{-1}\left(3\mu_{3}(s_{x})\right),\, t\right)
 =:\bar{\beta}_{x}(s_{x},\, t),
\end{align*}
which results in a $\KL$-function. With $\beta(s,t)=\mu_1(s)\varphi_1(t)$ and the condition \eqref{eq: key-stability-condition}, we also have,
\begin{align*}
\beta\left(4\underbar{\ensuremath{\gamma}}_{x,\, t}^{-1}\left(3\gamma_{w}(s_{w})\right),\, t\right) & = \mu_{1}\left(4\mu_{2}^{-1}\left(\frac{3\gamma_{w}(s_{w})}{\varphi_{2}(t)}\right)\right)\varphi_{1}(t)\\
  & \le \bar{\alpha}_{w}(s_{w}),
\end{align*}
for some function $\bar{\alpha}_{w}\in\K$. Similarly we have $\beta\left(4\underbar{\ensuremath{\gamma}}_{x,\, t}^{-1}\left(3\gamma_{v}(s_{v})\right),\, t\right)\le \bar{\alpha}_{v}(s_{v})$,
for some function $\bar{\alpha}_{v}\in\K$. Consequently,
\begin{align*}
 & \beta\left(s_{x}+\underbar{\ensuremath{\gamma}}_{x,\, t}^{-1}\left(\rho_{x}(s_{x},\, t)+\gamma_{w}(s_{w})+\gamma_{v}(s_{v})\right),\, t\right)\\
 & \le \beta\left(\begin{array}{c}
s_{x}+\underbar{\ensuremath{\gamma}}_{x,\, t}^{-1}\left(3\rho_{x}(s_{x},\, t)\right)\\
+\underbar{\ensuremath{\gamma}}_{x,\, t}^{-1}\left(3\gamma_{w}(s_{w})\right)+\underbar{\ensuremath{\gamma}}_{x,\, t}^{-1}\left(3\gamma_{v}(s_{v})\right)
\end{array},\, t\right)\\
 & \le \beta\left(4s_{x},\, t\right)+\beta\left(4\underbar{\ensuremath{\gamma}}_{x,\, t}^{-1}\left(3\rho_{x}(s_{x},\, t)\right),\, t\right)\\
 &  \,\,\,\,\,\,+\beta\left(4\underbar{\ensuremath{\gamma}}_{x,\, t}^{-1}\left(3\gamma_{w}(s_{w})\right),\, t\right)+\beta\left(4\underbar{\ensuremath{\gamma}}_{x,\, t}^{-1}\left(3\gamma_{v}(s_{v})\right),\, t\right)\\
 & \le \beta\left(4s_{x},\, t\right)+\bar{\beta}_{x}(s_{x},\, t)+\bar{\alpha}_{w}(s_{w})+\bar{\alpha}_{v}(s_{v})\\
 & = \bar{\beta}_{x}'(s_{x},\, t)+\bar{\alpha}_{w}(s_{w})+\bar{\alpha}_{v}(s_{v}),
\end{align*}
where $\bar{\beta}_{x}'(s_{x},\, t):=\beta\left(4s_{x},\, t\right)+\bar{\beta}_{x}(s_{x},\, t)$
which is a $\KL$-function. The last inequity means that
Assumption \ref{assump: A2} is satisfied. Together with the conditions in a) and b),
it establishes the conclusion by using Theorem \ref{thm:(RGAS-of-FIE}.
\end{proof}

In the condition c) of Corollary \ref{cor:The-FIE-defined}, the assumption of $\beta$ being a $\KdL$-function is trivial because we can always assign such a function as an alternative if the original $\KL$-function $\beta$ is not in a $\KdL$ form (cf. Lemma \ref{lem: factorizable-KL-bound}). The condition that $\underbar{\ensuremath{\rho}}_{x}$ and $\rho_{x}$ in \eqref{eq: Assumption 1} are $\KdL$-functions is not on the system dynamics, but a requirement on the cost function defined for the FIE. The key condition thus boils down to \eqref{eq: key-stability-condition}, requiring the cost function to be sufficiently sensitive (compared to the system's sensitivity) to the uncertainty in the initial state. This is intuitive because otherwise the estimator cannot detect the effect caused by the uncertainty and hence is unable to reconstruct the initial state accurately.

The FIE admits a more specific cost function if the system is i-IOSS as in \eqref{eq:definition - i-IOSS} where the $\KL$ bound has a polynomial form.

\begin{cor}
\label{cor:The-FIE-defined2}
The FIE defined in (\ref{eq: FIE}) is RGAS, if the following conditions are satisfied:

a) the system (\ref{eq:system}) is i-IOSS with the $\KL$ bound being given as $\beta(s,t)=c_{1}s^{a_1}(t+1)^{-b_1}$ for some constants $c_{1},a_{1},b_{1}>0$ and all $s,t\ge0$;

b) the infimum in (\ref{eq: FIE}) is attainable when the cost function is defined as
\begin{equation*}
    V_{t}(\mathcal{X}(0)-\bar{x}_0,\,\boldsymbol{\omega})
    = l_{x}(\mathcal{X}(0)-\bar{x}_0)(t+1)^{-b_{2}}+ l_{wv}(\boldsymbol{\omega},\,\boldsymbol{\nu},\,t),
\end{equation*}
where $b_{2}$ is a positive constant, and the functions $l_{x}$ and $l_{wv}$ are continuous and satisfy the following inequalities for all $x\in \mathbb{R}^{n}$, $\boldsymbol{w}\in \mathbb{B}_{w}$ and $\boldsymbol{v}\in \mathbb{B}_{v}$:
\begin{align*}
c_{2}|x|^{a_2}=:\underbar{\ensuremath{\gamma}}_{x}'(|x|) & \le l_{x}(x)\le \gamma_{x}'(|x|),\\
\underbar{\ensuremath{\gamma}}_{w}(\|\boldsymbol{w}\|_{0:t-1})+\underbar{\ensuremath{\gamma}}_{v}(\|\boldsymbol{v}\|_{0:t}) & \le l_{wv}(\boldsymbol{w},\,\boldsymbol{v},\,t)\\
& \le \gamma_{w}(\|\boldsymbol{w}\|_{0:t-1})+\gamma_{v}(\|\boldsymbol{v}\|_{0:t}),
\end{align*}
in which $c_{2}$ and $a_2$ are positive constants, and $\underbar{\ensuremath{\gamma}}_{w},\,\underbar{\ensuremath{\gamma}}_{v}, \,\gamma_{x}',\,\gamma_{w}, \,\gamma_{v} \in \Kinf$;

c) the above parameters $a_{2}$ and $b_{2}$ satisfy $\frac{a_2}{b_2}\ge\frac{a_1}{b_1}$.
\end{cor}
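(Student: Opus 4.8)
The plan is to obtain Corollary~\ref{cor:The-FIE-defined2} as an instance of Corollary~\ref{cor:The-FIE-defined}, i.e., to check that hypotheses a)--c) here imply conditions a)--c) of Corollary~\ref{cor:The-FIE-defined}. Condition a) (i-IOSS) is assumed verbatim, with the $\KL$ bound already in $\KdL$ form $\beta(s,t)=\mu_{1}(s)\varphi_{1}(t)$, $\mu_{1}(s)=c_{1}s^{a_1}$, $\varphi_{1}(t)=(t+1)^{-b_1}$. For condition b) I would read off the bounds on $l_{x}$ and $l_{wv}$ to put $V_{t}$ in the form of Assumption~\ref{assump: A1}, taking
\[
\underbar{\ensuremath{\rho}}_{x}(s,t)=c_{2}s^{a_2}(t+1)^{-b_2},\qquad \rho_{x}(s,t)=\gamma_{x}'(s)(t+1)^{-b_2},
\]
together with the $\Kinf$-functions $\underbar{\ensuremath{\gamma}}_{w},\underbar{\ensuremath{\gamma}}_{v},\gamma_{w},\gamma_{v}$; one notes that $\underbar{\ensuremath{\rho}}_{x}$ and $\rho_{x}$ are genuine $\KL$-functions (a $\K$-function in $s$ times the $\LL$-function $(t+1)^{-b_2}$), so Assumption~\ref{assump: A1} holds, and attainability of the infimum is assumed.

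The substantive step is condition c) of Corollary~\ref{cor:The-FIE-defined}. Here I would use the factorizations $\underbar{\ensuremath{\rho}}_{x}(s,t)=\mu_{2}(s)\varphi_{2}(t)$ and $\rho_{x}(s,t)=\mu_{3}(s)\varphi_{2}(t)$ with $\mu_{2}(s)=c_{2}s^{a_2}$, $\mu_{3}(s)=\gamma_{x}'(s)$, and the \emph{common} time factor $\varphi_{2}(t)=(t+1)^{-b_2}$ --- sharing the same $\varphi_{2}$ is exactly what the definition in Corollary~\ref{cor:The-FIE-defined} requires, and it holds automatically because both penalties in $V_{t}$ carry the factor $(t+1)^{-b_2}$. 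It then remains to verify~\eqref{eq: key-stability-condition}. Using $\mu_{2}^{-1}(r)=(r/c_{2})^{1/a_2}$, a direct computation gives, for an arbitrary $\pi\in\K$,
\[
\mu_{1}\!\left(4\mu_{2}^{-1}\!\left(\tfrac{\pi(s)}{\varphi_{2}(t)}\right)\right)\varphi_{1}(t)=c_{1}4^{a_1}c_{2}^{-a_1/a_2}\,\pi(s)^{a_1/a_2}\,(t+1)^{\,b_2 a_1/a_2-b_1}.
\]
This is the one place hypothesis c) enters: $\tfrac{a_2}{b_2}\ge\tfrac{a_1}{b_1}$ is equivalent to $b_2 a_1/a_2-b_1\le0$, so $(t+1)^{\,b_2 a_1/a_2-b_1}\le1$ for all $t\ge0$, whence the left-hand side is bounded above by $\pi'(s):=c_{1}4^{a_1}c_{2}^{-a_1/a_2}\pi(s)^{a_1/a_2}$, which lies in $\K$ because $\pi\in\K$ and $a_1/a_2>0$. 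Thus~\eqref{eq: key-stability-condition} holds, all three conditions of Corollary~\ref{cor:The-FIE-defined} are met, and the FIE is RGAS.

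I do not expect a real obstacle: once the correct factorizations are written down the argument is bookkeeping, and the only point worth flagging is the exponent arithmetic --- the time factor in~\eqref{eq: key-stability-condition} collapses to $(t+1)^{\,b_2 a_1/a_2-b_1}$, so the decay rate $b_2$ and the growth exponent $a_2$ of the initial-state penalty must be balanced against the i-IOSS parameters precisely through $\tfrac{a_2}{b_2}\ge\tfrac{a_1}{b_1}$. If this inequality failed, the time factor would grow polynomially in $t$ and~\eqref{eq: key-stability-condition} could not be satisfied by any $\K$-function $\pi'$, which is why condition c) is the sharp requirement and not merely a convenient sufficient one.
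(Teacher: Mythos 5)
Your proposal is correct and follows essentially the same route as the paper's own proof: verify Assumption \ref{assump: A1} with $\underbar{\ensuremath{\rho}}_{x}(s,t)=c_{2}s^{a_2}(t+1)^{-b_2}$ and $\rho_{x}(s,t)=\gamma_{x}'(s)(t+1)^{-b_2}$, factor with the common $\varphi_{2}(t)=(t+1)^{-b_2}$, and reduce \eqref{eq: key-stability-condition} to the exponent inequality $b_{2}a_{1}/a_{2}-b_{1}\le 0$, which is exactly condition c). (Minor note: your $\pi'\in\K$ is the correct class; the paper's proof calls it a $\KL$-function, which is a typo.)
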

\begin{proof}
It is straightforward to show that the cost function given above satisfies Assumption \ref{assump: A1}, in which the $\KL$-functions are given as $\underbar{\ensuremath{\rho}}_{x}(|x|,\,t):=\underbar{\ensuremath{\gamma}}_{x}'(|x|)(t+1)^{-b_2}$ and $\rho_{x}:=\gamma_{x}'(|x|)(t+1)^{-b_2}$. These two functions are factorizable as $\mu_{2}(s)\varphi_{2}(t)$ and $\mu_{3}(s)\varphi_{2}(t)$, respectively, with $\mu_{2}(s):=\underbar{\ensuremath{\gamma}}_{x}'(s)=c_{2}s^{a_2}$, $\mu_{3}(s):=\gamma_{x}'(s)$ and $\varphi_{2}(t):=(t+1)^{-b_2}$. Given the condition a) above, the $\KL$ bound associated with the i-IOSS property of the system is obtained as $\beta_{x}(s,\,t)=\mu_{1}(s)\varphi_{1}(t)$, with $\mu_{1}(s):=c_{1}s^{a_1}$ and $\varphi_{1}(t):=(t+1)^{-b_1}$. Then for any function $\pi\in\K$, we have \begin{align*}
& \mu_{1}\left(4\mu_{2}^{-1}\left(\frac{\pi(s)}{\varphi_2(t)}\right)\right)\varphi_{1}(t)\\
& =c_{1}\left(4\left(\frac{1}{c_2}\pi(s)(t+1)^{b_2}\right)^{\frac{1}{a_2}}\right)^{a_1}(t+1)^{-b_1}\\
& =4^{a_{1}}c_{1}c_{2}^{-\frac{a_1}{a_2}}(\pi(s))^{\frac{a_1}{a_2}}(t+1)^{\frac{a_1b_2}{a_2}-b_1}\\
& \le 4^{a_{1}}c_{1}c_{2}^{-\frac{a_1}{a_2}}(\pi(s))^{\frac{a_1}{a_2}}=:\pi'(s),
\end{align*}
where $\pi'$ is a $\KL$-function, and the condition c) has been used to derive the inequality. Hence the condition c) of Corollary \ref{cor:The-FIE-defined} is satisfied. As the conditions a) and b) there are also satisfied, this proves that the FIE is RGAS by Corollary \ref{cor:The-FIE-defined}.
\end{proof}

The conditions b)-c) of Corollary \ref{cor:The-FIE-defined2} are manifestations of the general conditions given in Theorem \ref{thm:(RGAS-of-FIE}, subject to the condition a) here. We remark that this corollary recovers the main result in \cite{ji2013robust} if the design parameter $b_2$ is fixed to 1 (with a minor difference that here the FIE is able to utilize the last measurement in the estimation, whose fitting error is penalized through $\nu(t)$).

More specific cost functions that satisfy the conditions b)-c) of Corollary \ref{cor:The-FIE-defined2} may have the following forms:
\begin{align*}
l_{x}(\mathcal{X}(0)-\bar{x}_0)(t+1)^{-b_2}:=\frac{c_2|\mathcal{X}(0)-\bar{x}_0|^{a_2}}{(t+1)^{b_2}},
\end{align*}
for positive constants $a_{2},b_{2}$ satisfying $\frac{a_2}{b_2}\ge\frac{a_1}{b_1}$ and any positive constant $c_2$, and
\begin{align*}
&l_{wv}(\boldsymbol{\omega},\,\boldsymbol{\nu},\,t)
=\dfrac{1}{t+1}\left(\lambda_{w}\sum_{i=0}^{t-1}l_{w,i}(\omega(i))+\lambda_{v}\sum_{i=0}^{t}l_{v,i}(\nu(i))\right)\\
&\,\,\,\,+(1-\lambda_{w})\max_{i\in \mathbb{I}_{0:t-1}}{l_{w,i}(\omega(i))}+(1-\lambda_{v})\max_{i\in \mathbb{I}_{0:t}}{l_{w,i}(\nu(i))},
\end{align*}
for given constants $\lambda_{w},\,\lambda_{v}\in[0,\,1]$, in which the functions $l_{w,i}$ and $l_{v,i}$ are such that:
\begin{equation*}
\underbar{\ensuremath{\gamma}}_{w}'(|w|) \le l_{w,i}(w)\le \gamma_{w}'(|w|),\,\,
\underbar{\ensuremath{\gamma}}_{v}'(|v|) \le l_{v,i}(v)\le \gamma_{v}'(|v|),
\end{equation*}
where $\underbar{\ensuremath{\gamma}}_{w}',\,\underbar{\ensuremath{\gamma}}_{v}',\,\gamma_{w}',\,\gamma_{v}'\in \Kinf$.

Furthermore, if the system described in \eqref{eq:system} is exp-i-IOSS, then the polynomial $\KL$ bound in Corollary \ref{cor:The-FIE-defined2} can be tightened to have an exponential form. Consequently we may define a cost that better penalizes the deviation from the prior initial state estimate, which intuitively would improve FIE's estimation performance.

\begin{cor} \label{cor:The-FIE-defined3}
The FIE defined in (\ref{eq: FIE}) is RGAS, if the following conditions are satisfied:

a) the system (\ref{eq:system}) is exp-i-IOSS with the $\KL$-function being given as $\beta(s,\,t)=c_1 s^{a_1} b_1^t$ for some constants $c_1, a_1 > 0$ and $0 < b_1 < 1$ and all $s,t \ge 0$;

b) the condition b) of Corollary \ref{cor:The-FIE-defined2} is satisfied with the factor $(t+1)^{-b_2}$ being replaced with $b_2^t$;

c) the  parameters $a_{2}$ and $b_{2}$ satisfy $\sqrt[a_2]b_2 \ge \sqrt[a_1]b_1$.
\end{cor}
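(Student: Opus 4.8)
The plan is to follow the proof of Corollary~\ref{cor:The-FIE-defined2} essentially verbatim, replacing each polynomial decay factor by its exponential counterpart, and then invoke Corollary~\ref{cor:The-FIE-defined}. First I would observe that exp-i-IOSS is a special case of i-IOSS, so condition~a) of Corollary~\ref{cor:The-FIE-defined} holds, and that the bound $\beta(s,t)=c_{1}s^{a_{1}}b_{1}^{t}$ is already in $\KdL$ form, namely $\beta(s,t)=\mu_{1}(s)\varphi_{1}(t)$ with $\mu_{1}(s):=c_{1}s^{a_{1}}\in\K$ and $\varphi_{1}(t):=b_{1}^{t}\in\LL$ (the hypothesis $0<b_{1}<1$ is exactly what makes $\varphi_{1}$ an $\LL$-function). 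Next I would verify, exactly as in the proof of Corollary~\ref{cor:The-FIE-defined2}, that the cost function of condition~b) satisfies Assumption~\ref{assump: A1}, now with $\underbar{\ensuremath{\rho}}_{x}(|x|,t)=c_{2}|x|^{a_{2}}b_{2}^{t}$ and $\rho_{x}(|x|,t)=\gamma_{x}'(|x|)b_{2}^{t}$, the functions $\underbar{\ensuremath{\gamma}}_{w},\underbar{\ensuremath{\gamma}}_{v},\gamma_{w},\gamma_{v}$ remaining the same $\Kinf$-functions; here $\underbar{\ensuremath{\rho}}_{x}$ and $\rho_{x}$ are $\KdL$-functions sharing the factor $\varphi_{2}(t):=b_{2}^{t}$, with $\mu_{2}(s):=c_{2}s^{a_{2}}$ and $\mu_{3}(s):=\gamma_{x}'(s)$, where the replacement implicitly requires $0<b_{2}<1$ so that $\varphi_{2}\in\LL$ (condition~c) together with $0<b_{1}<1$ keeps this consistent). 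Hence conditions~a)--b) of Corollary~\ref{cor:The-FIE-defined} are in force, and the requirement that $\beta$ be a $\KdL$-function is automatic.

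It then remains only to verify the key stability inequality \eqref{eq: key-stability-condition}. For an arbitrary $\pi\in\K$ I would compute
\begin{align*}
\mu_{1}\!\left(4\mu_{2}^{-1}\!\left(\frac{\pi(s)}{\varphi_{2}(t)}\right)\right)\varphi_{1}(t)
&=c_{1}\!\left(4\bigl(\tfrac{1}{c_{2}}\pi(s)\,b_{2}^{-t}\bigr)^{1/a_{2}}\right)^{a_{1}}b_{1}^{t}\\
&=4^{a_{1}}c_{1}c_{2}^{-a_{1}/a_{2}}\bigl(\pi(s)\bigr)^{a_{1}/a_{2}}\bigl(b_{1}b_{2}^{-a_{1}/a_{2}}\bigr)^{t}.
\end{align*}
Condition~c) above reads $b_{2}^{1/a_{2}}\ge b_{1}^{1/a_{1}}$; raising both sides to the power $a_{1}>0$ gives $b_{2}^{a_{1}/a_{2}}\ge b_{1}$, so that $b_{1}b_{2}^{-a_{1}/a_{2}}\le1$ and hence the $t$-dependent factor is at most $1$ for every $t\ge0$. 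Therefore the left side is bounded by $\pi'(s):=4^{a_{1}}c_{1}c_{2}^{-a_{1}/a_{2}}\bigl(\pi(s)\bigr)^{a_{1}/a_{2}}$, which lies in $\K$. This is precisely condition~c) of Corollary~\ref{cor:The-FIE-defined}, and applying that corollary yields the RGAS conclusion.

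I do not anticipate any genuine obstacle: the whole argument is a transcription of the polynomial case treated in Corollary~\ref{cor:The-FIE-defined2}, with $(t+1)^{-b_{i}}$ replaced by $b_{i}^{t}$ throughout. The only points demanding care are the exponent bookkeeping in the displayed computation --- checking that condition~c) is exactly what forces $(b_{1}b_{2}^{-a_{1}/a_{2}})^{t}\le1$ --- and recording the implicit standing requirement $0<b_{2}<1$ needed for the replacement factor $b_{2}^{t}$ to be a legitimate $\LL$-function.
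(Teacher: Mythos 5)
Your proof is correct and is exactly the argument the paper intends: the paper itself omits the proof, saying only that it ``follows a routine similar to that of the proof for Corollary~\ref{cor:The-FIE-defined2},'' and your transcription with $\mu_1(s)=c_1 s^{a_1}$, $\varphi_1(t)=b_1^t$, $\mu_2(s)=c_2 s^{a_2}$, $\varphi_2(t)=b_2^t$ and the verification that condition~c) forces $\bigl(b_1 b_2^{-a_1/a_2}\bigr)^t\le 1$ is precisely that routine; the exponent bookkeeping checks out.

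One caveat deserves mention. You record $0<b_2<1$ as an implicit standing requirement so that $\varphi_2\in\LL$, but the corollary's condition~b) only asks that $b_2$ be a positive constant, condition~c) is satisfied by every $b_2\ge 1$ (since $b_1<1$), and the paper explicitly remarks immediately after the corollary that $b_2\ge 1$ is \emph{not} excluded (and simulates $b_2=2$). Your route through Corollary~\ref{cor:The-FIE-defined} genuinely cannot reach that case: for $b_2\ge 1$ the factor $b_2^t$ is not an $\LL$-function, and moreover $\rho_x(s,t)=\gamma_x'(s)b_2^t$ fails the $\KL$ requirement of Assumption~\ref{assump: A1}, so neither Corollary~\ref{cor:The-FIE-defined} nor Theorem~\ref{thm:(RGAS-of-FIE} applies as stated. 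So strictly you have proved the corollary only for $b_2<1$; to be fair, the paper's own stated framework does not literally cover $b_2\ge 1$ either, so your restriction is the honest reading, but you should state explicitly that the $b_2\ge1$ case claimed in the surrounding text is outside the scope of this argument rather than silently folding it into a consistency remark.
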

\begin{proof}
The proof follows a routine similar to that of the proof for Corollary \ref{cor:The-FIE-defined2} and is omitted for brevity.
\end{proof}

It is worthwhile to mention that the condition c) of Corollary \ref{cor:The-FIE-defined3} does not require $b_2 < 1$. That is, an FIE with $b_2 \ge 1$ may also be RGAS despite that the sub-cost associated with the initial state diverges in time. We will illustrate this in the simulation section.

By Corollary \ref{cor:The-FIE-defined3}, it is valid to specify the sub-cost associated with the initial state as $c_2|\mathcal{X}(0)-\bar{x}_0|^{a_2} b_2^t$, with the positive constants $\{a_2, b_2, c_2\}$ satisfying the condition c) of Corollary \ref{cor:The-FIE-defined3}. The sub-cost associated with the disturbances may be defined to have the same form presented after Corollary \ref{cor:The-FIE-defined2}.

\begin{rem}
As in \cite{glas1987exponential}, nonlinear systems that are asymptotically stable but not exponentially stable fail to be structurally stable and constitute a boundary set, and hence of little practical interest. They prove that the set of exponentially stable systems are dense in the whole set of asymptotically stable systems. It thus does not lose generality or practical interest for Corollary \ref{cor:The-FIE-defined3} to focus on i-IOSS systems that are exponentially stable.
\end{rem}

\begin{rem}
The conclusion that the state estimate given by the FIE converges to the true state if we know the disturbances converge to zero remains true under the conditions of Corollaries \ref{cor:The-FIE-defined}-\ref{cor:The-FIE-defined3}. This is because the convergence is implied by the i-IOSS property of the system and the RGAS property of the estimator under bounded and convergent disturbances (cf. the proof of Theorem \ref{thm:(RGAS-of-FIE}).
\end{rem}

\section{Numerical Example}\label{sec:numerical example}

We use a simple example to illustrate the theoretical
results concluded by Corollaries \ref{cor:The-FIE-defined2}-\ref{cor:The-FIE-defined3}. Consider an asymptotically stable system with linear dynamics and nonlinear measurement: $x^{+}=0.9x+w,\,y=x^3+v$, where $x$ is the state, $y$ the measurement, $w$ the state disturbance, and $v$ the measurement noise. The disturbance $\{w(k)\}$ and noise $\{v(k)\}$ are two sequences of independent, zero
mean, normally distributed random variables with variances $\sigma_{w}^{2}$ and $\sigma_{v}^{2}$ equal to $0.1^2$ and $0.2^2$, respectively, as
further truncated to the intervals $[-3\sigma_{w},\,3\sigma_{w}]$ and $[-3\sigma_{v},\,3\sigma_{v}]$, respectively. The initial state $x(0)$ is a random variable independent of the disturbances $\{w(k)\}$ and $\{v(k)\}$, and follows a normal distribution with a mean of 5 and a variance of $\sigma_{x_{0}}^{2}$ equal to 4. The prior estimate of the initial state is given as $\bar{x}_{0}=2$.

We can show that the system is exp-i-IOSS with the $\K\cdot\LL$ bound given by $\beta(s, t) = s0.9^t$ (the proof is omitted for page limit). By Corollary \ref{cor:The-FIE-defined3}, for the FIE to be RGAS its cost function can be specified as
\begin{align*}
V_{t}=& \frac{\left(\chi(0)-\bar{x}_{0}\right)^{2}b_2^t}{\sigma_{x_{0}}^{2}}
+ \dfrac{1}{t+1}\left(\dfrac{\lambda_{w}}{\sigma_{w}^{2}}\sum_{i=0}^{t-1}\omega^{2}(i)+\dfrac{\lambda_{v}}{\sigma_{v}^{2}}\sum_{i=0}^{t}\nu^{2}(i)\right)\\
&+\dfrac{1-\lambda_{w}}{\sigma_{w}^{2}}\|\boldsymbol{\omega}\|_{0:t-1}^{2}+\dfrac{1-\lambda_{v}}{\sigma_{v}^{2}}\|\boldsymbol{\nu}\|_{0:t}^{2},
\end{align*}
for any given constants $b_2 \ge 0.9^2=0.81 $ and $\lambda_{w},\,\lambda_{v}\in[0,\,1]$. By solving the FIE (with $b_2=0.81$) subject to $\left\Vert \boldsymbol{\omega}\right\Vert _{0:t-1} \le 3\sigma_{w}$ and
$\left\Vert \boldsymbol{\nu}\right\Vert _{0:t} \le 3\sigma_{v}$, we obtain the state estimates for each $t\in \mathbb{I}_{0:20}$. The estimation errors, defined by $e(t|t)=x(t)-\hat{x}(t|t)$, are averaged over 500 random instances, as shown in Fig. \ref{fig: numerical results} for evenly sampled times. To compare, the state estimation errors resulting from a generic EKF \cite{haseltine2005critical} are also shown in the figure.

\begin{figure}
\begin{centering}
\includegraphics[scale=0.65]{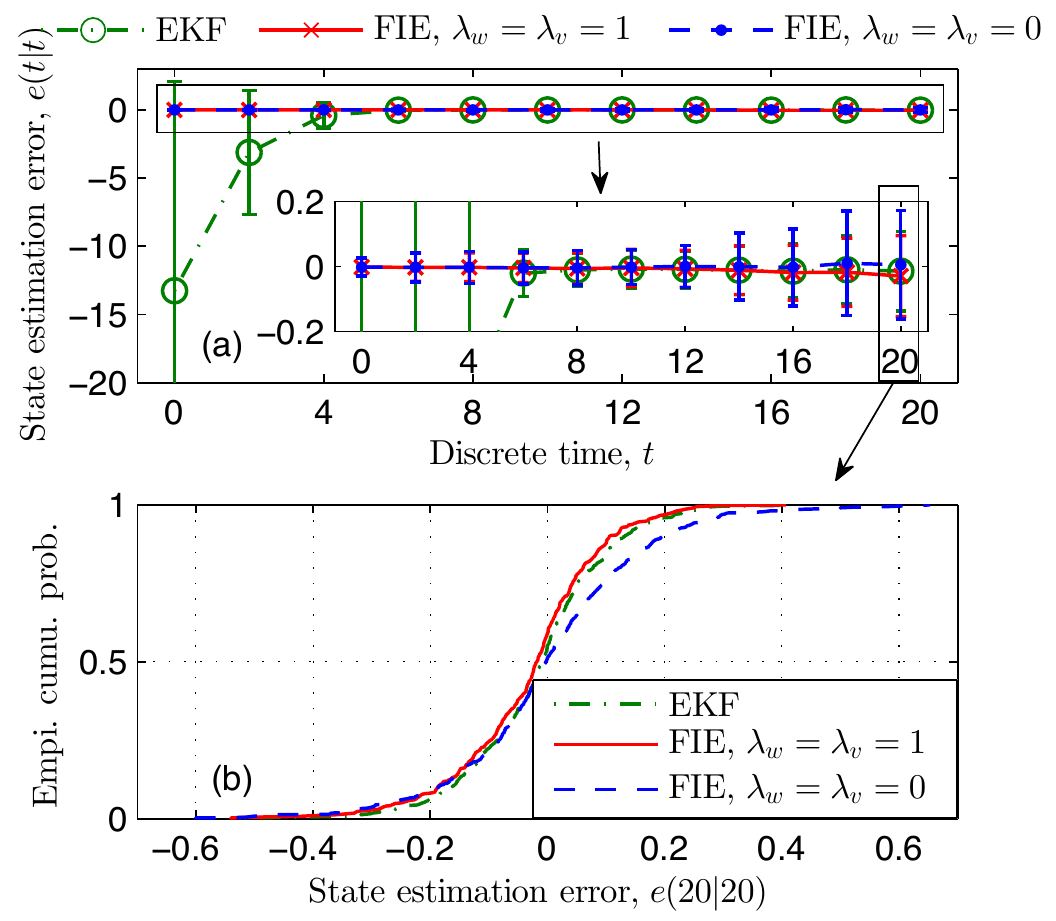}
\par\end{centering}
\caption{ Estimation results: (a) mean and variation of the state estimation error;
(b) empirical cumulative distribution function plot of the state estimation error.}
\label{fig: numerical results}
\end{figure}

We observe that the FIE yields bounded estimation errors (which holds true for longer simulation times) and outperforms the EKF significantly during the early estimation stage. Yet the advantage decays as the EKF accumulates sufficient iterations, say, when $t\ge6$ in this case. The early advantage owes to FIE using all measurements accumulated to compute an optimal estimate of the present state, while the EKF merely uses the current measurement to update its previous estimate. From Fig. \ref{fig: numerical results}, we also observe that the FIE with $\lambda_w=\lambda_v=1$ results in more accurate estimation than with $\lambda_w=\lambda_v=0$. Moreover, we applied the FIEs with $b_2=2$ for the two cases, which imposes a heavier and divergent sub-cost for deviation of the estimate of the initial state from its prior estimate. The FIEs yield slightly worse estimation results: when $b_2=0.81$, the state estimation error has a standard deviation of 0.065 (or 0.081) and an average absolute value of 0.037 (or 0.046) over $t\in \mathbb{I}_{0:20}$ for $\lambda_w=\lambda_v=1$ (or 0); in contrast, when $b_2=2$ the standard deviation and the average absolute value are equal to 0.068 and 0.038 (or, 0.091 and 0.057 for $\lambda_w = \lambda_v = 0$), respectively.

 Additionally, we may use a looser $\K\cdot\LL$ bound as $\beta(s, t) = s(t+1)^{\ln0.9}$, and consequently the cost function can alternatively be defined by Corollary \ref{cor:The-FIE-defined2} as:
\begin{align*}
V_{t}'=&\frac{\left(\chi(0)-\bar{x}_{0}\right)^{2}}{\sigma_{x_{0}}^{2}(t+1)^{b_2}}
+ \dfrac{1}{t+1}\left(\dfrac{\lambda_{w}}{\sigma_{w}^{2}}\sum_{i=0}^{t-1}\omega^{2}(i)+\dfrac{\lambda_{v}}{\sigma_{v}^{2}}\sum_{i=0}^{t}\nu^{2}(i)\right)\\
&+\dfrac{1-\lambda_{w}}{\sigma_{w}^{2}}\|\boldsymbol{\omega}\|_{0:t-1}^{2}+\dfrac{1-\lambda_{v}}{\sigma_{v}^{2}}\|\boldsymbol{\nu}\|_{0:t}^{2},
\end{align*}
where $0<b_2 \le -2\ln0.9 \approx 0.21$ and $\lambda_{w},\,\lambda_{v}$ are the same as above. We implemented the FIE with this new cost function for $b_2=0.21$ and ran simulations on the same instances. The state estimation results almost coincide with those obtained using the previous cost function for $b_2=0.81$: the standard deviation and the average absolute value are obtained as 0.065 and  0.037 (or, 0.082 and 0.046 for $\lambda_w = \lambda_v = 0$), respectively.

\section{Conclusions\label{sec:Conclusion}}

This paper presented sufficient conditions for a
full information estimator (FIE) to be robustly globally asymptotically
stable (RGAS) under bounded process and measurement disturbances. The conditions require that the cost function
being optimized has a property resembling the i-IOSS
stability of the system, but with a higher sensitivity to the uncertainty of the initial state. The results are applicable to convergent disturbances, yielding a stronger conclusion that the estimation error of the FIE converges to zero.

As the FIE becomes computationally intractable once the estimation time is large, it is practically important to extend our results to the moving-horizon estimator (MHE). Intuitively, this would require stringer conditions than those of the FIE. The future research is thus to establish such conditions and prove their sufficiency and/or necessity.



\renewcommand{\bibfont}{\fontsize{9}{10}\selectfont}

\bibliographystyle{IEEEtran}
\bibliography{HuXieYou_FIE2015}

\end{document}